\documentclass[11pt]{article}
\usepackage{amsmath,amsthm,amsfonts,latexsym,amssymb,enumerate,color}
\usepackage{graphicx}

\def\CC{\mathbb C}

\def\MM{{\mathcal M}}
\def\RR{\mathbb R}

\def\LH{{\rm L}(H)}
\def\LUX{{\rm L}(U,X)}
\def\LX{{\rm L}(X)}
\def\LL{\mathcal L}

\def\imag{\mathop{\rm Im}\nolimits}
\def\re{\mathop{\rm Re}\nolimits}

\def\sgn{\mathop{\rm sgn}\nolimits}
\def\ds{\displaystyle}

\newtheorem{thm}{Theorem}[section]
\newtheorem{prop}[thm]{Proposition}

\newtheorem{cor}[thm]{Corollary}
\newtheorem{rem}[thm]{Remark}

\newtheorem{ex}[thm]{Example}

\numberwithin{equation}{section}

\def\beginpf{\begin{proof}}
\def\endpf{\end{proof}}
\def\beq{\begin{equation}}
\def\eeq{\end{equation}}

\begin{document}

\title{Weighted operator-valued function spaces applied to the   stability of delay systems}
\author{Asmahan E. Alajyan\thanks{Department of Mathematics and Statistics, College of Science, King Faisal University, Al-Ahsa, Saudi Arabia and School of Mathematics, University of Leeds, Leeds LS2 9JT, U.K.
{\tt ml15a3a@leeds.ac.uk}} \ and Jonathan R. Partington\thanks{
School of Mathematics, University of Leeds, Leeds LS2 9JT, U.K. {\tt j.r.partington@leeds.ac.uk}}}

\date{July 2020, revised June 2021}

\maketitle

\begin{abstract}
\noindent This paper extends the theory of Zen spaces (weighted Hardy/Berg\-man spaces on the right-hand
half-plane) to the Hilbert-space valued case, and describes the multipliers on them; it is shown
that the methods of $H^\infty$ control can therefore be extended to a family of weighted $L^2$
input and output spaces. Next, the particular case of retarded delay systems with operator-valued transfer
functions is analysed, and the dependence of $H^\infty$ structure on the delay is determined by
developing an extension of the Walton--Marshall technique  used in the scalar case. The method
is illustrated with examples.
\end{abstract}

\noindent {\bf Keywords:} Hardy space, Bergman space, Zen space, Plancherel theorem, $H^\infty$ control,
retarded delay system, stability, subnormal operator.

\noindent {\bf 2010 Subject Classification:} 30H10, 30H20, 44A10, 46E15, 47N70, 93B36

\section{Introduction}

There is an extensive literature on the use of the $H^\infty$ norm of an analytic (operator-valued) function on the right-hand
half-plane $\CC_+$, which describes the gain of a linear time-invariant system from (vector-valued) $L^2(0,\infty)$ inputs to
$L^2(0,\infty)$ outputs; we mention here some well-known books on the subject, namely,
\cite{francis,glover,vidyasagar}.  Some new contributions to the theory will be presented here.
There are two themes: first we extend the recent theory of Zen spaces \cite{zen09,zen10,jpp13} to  functions
taking values in a Hilbert space $H$,
where the Laplace transform $ \LL$ provides an isometric embedding from a weighted
function space $L^2(0,\infty, w(t) dt; H)$ into a space $A^2_\nu(\CC_+,H)$ of analytic operator-valued functions (all notation will be defined below).
In this context we prove a theorem showing that the $H^\infty$ norm  can be used to measure the gain (operator norm)
of a linear system defined
 in the
context of a wide variety of weighted $L^2$ spaces; thus we show that various
notions of stability are equivalent.\\

  Second, we apply the analysis to one particular case, namely
  the 
$H^\infty$ stability of retarded delay systems with transfer
functions of the form
\beq\label{eq:gs}
G(s)=(P(s)I+Q(s)e^{-sh}A)^{-1},
\eeq
where $P$ and $Q$ are real polynomials and $A$ is a bounded operator on a Banach space $X$.
These arise from delay-differential equations of the form
\[
\sum_{j=0}^n a_j \frac{\partial^j x(t)}{\partial t^j} + A \sum_{k=0}^n b_k \frac{\partial^k x(t-h)}{\partial t^k}=u(t),
\qquad x(0)=0,
\]
where the $a_j$ and $b_k$ are scalars,
by taking Laplace transforms of each side, 
so that $\LL x(s)=G(s)\LL u(s)$ where $G$ has the form given in \eqref{eq:gs}. We refer to \cite{BC,partington2004} for further details and other ways of formulating such systems by differential equations.

The example that will illustrate most of our results arises from the delay-differential equation 
\begin{equation}\label{eq:diffeq}
\dot{x}(t)+Ax(t-h)=u(t), \quad x(0)=0,
\end{equation}
and similar equations, with $x(t)\in X, u(t)\in U$ (where $X$ ~and~ $ U $ are Banach spaces) and $h$ is the delay.  
In this case the operator-valued transfer function is $(sI+e^{-sh}A)^{-1}$ and $L^2$-to-$L^2$ stability holds
if and only if the operator-valued function is bounded in the right-hand half-plane $\CC_+$.
The Walton--Marshall method \cite{WM,partington2004} gives such an analysis in the  purely scalar case $A=a$, say: the method 
involves increasing $h$ to see where the zeros of $P(s)+aQ(s)e^{-sh}$
cross the imaginary axis.
Additionally, at such points  the direction in which the zeros  cross the axis can be  identified.
Here we develop the Walton--Marshall method further to apply it  to  bounded operators.\\

We recall that
a classification of delay systems into retarded, neutral and advanced type can be found in \cite{BC}.
We shall show that, even in the operatorial case, for systems of retarded type ($\deg P > \deg Q$) invertibility of $P(s)I+Q(s)e^{-sh}A$ is equivalent to the inverse
function being in $H^\infty$: this is true for retarded systems, but not for systems of neutral type ($\deg P = \deg Q$). Systems of advanced type ($\deg P<\deg Q$) are never stable. \\

We write $H^\infty$ for the Hardy space of bounded analytic functions on the right-hand half-plane $\CC_+$, 
$\LUX$ for the bounded operators from $U$ to $X$,  
and $H^\infty(\CC_+,\LUX)$ or simply
$H^\infty(\LUX)$ for the space of bounded $\LUX$-valued functions, with norm
\[
\|F\|= \sup_{s \in \CC_+} \|F(s)\|.
\]
We shall mostly be able to take $U=X$, and then we write $\LX$ for ${\rm L}(X,X)$.

\section{Stability on weighted $L^2$ spaces}\label{sec:2}

In this section we show that $H^\infty$ methods can be
applied to stability questions in a wide variety of weighted $L^2(0,\infty)$ spaces. 

Let $w(t)$
be a positive measurable function. Then for a separable Hilbert space $H$ we write $L^2(0,\infty,   w(t) dt, H)$
for the space of measurable $H$-valued functions $f$ such that the norm $\|f\|$, given by
\[
\|f\|^2 = \int_0^\infty \|f(t)\|^2 w(t)\, dt,
\]
is finite. We start by giving conditions on $w$ for  the Laplace transform to  induce an isometry
between $L^2(0,\infty,   w(t) dt, H)$ and a space of $H$-valued analytic functions on $\CC_+$.

Let $\nu$ be a positive regular Borel measure satisfying the doubling condition
\[
R:= \sup_{t>0} \frac{\nu[0,2t)}{\nu[0,t)} < \infty.
\]
The Zen space $A^2_\nu(H)$ is defined to consist of all analytic $H$-valued functions $F$ on $\CC_+$
such that the norm, given by
\[
\|F\|^2=\sup_{\epsilon>0} \int_{\overline{\CC_+}} \|F(s+\epsilon)\|^2 \, d\nu(x) \, dy
\]
is finite, where we write $s=x+iy$ for $x\ge 0$ and $y \in \RR$.

The best-known examples here are:
\begin{enumerate}
\item For  $\nu=\delta_0$, a Dirac mass at $0$,  we obtain the
Hardy space $H^2(\CC_+,H)$;
\item For $\nu$ equal to Lebesgue measure ($dx$), we obtain the Bergman space $A^2(\CC_+,H)$.
\end{enumerate}

Often we shall have $\nu\{0\}=0$, in which case $\|F\|^2$ can be written simply as
\[
\int_{\overline{\CC_+}} \|F(s)\|^2 \, d\nu(x) \, dy.
\]

\begin{thm}
Suppose that $w$ is given as a weighted Laplace transform
\beq \label{eq:defw}
w(t)=2\pi \int_0^\infty e^{-2rt} \, d\nu(r), \qquad (t >0).
\eeq
Then the Laplace transform provides an isometric map 
\beq
\LL:  L^2(0,\infty,   w(t) dt, H) \to A^2_\nu(H). \label{eq:ltla}
\eeq
\end{thm}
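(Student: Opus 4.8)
The plan is to reduce the statement to the scalar Zen-space isometry of \cite{zen09,zen10,jpp13} by combining the vector-valued Plancherel theorem with Tonelli's theorem, so that the whole argument comes down to inserting the defining identity \eqref{eq:defw} for $w$ at the right moment.

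First I would record that, for every $f\in L^2(0,\infty,w(t)\,dt,H)$, the integral $(\LL f)(s)=\int_0^\infty f(t)e^{-st}\,dt$ converges absolutely in $H$ for each $s=x+iy$ with $x>0$ and defines an analytic $H$-valued function on $\CC_+$. This needs only elementary properties of $w$: the standing doubling hypothesis makes $w$ finite on $(0,\infty)$ and $\nu[0,t)>0$ for all $t>0$, and from the trivial lower bound $w(t)\ge 2\pi e^{-xt}\nu[0,x/2)$ one gets both $e^{-2xt}\le w(t)/(2\pi\,\nu[0,x))$ and $\int_0^\infty e^{-2xt}/w(t)\,dt<\infty$. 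By Cauchy--Schwarz these put $t\mapsto f(t)e^{-xt}$ in $L^1(0,\infty;H)\cap L^2(0,\infty;H)$, which gives absolute convergence; analyticity then follows by differentiating under the integral sign, with dominating functions locally uniform in $x$.

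The heart of the proof is a single identity on each vertical line. Fix $x>0$; the function $y\mapsto(\LL f)(x+iy)$ is, up to the usual normalisation, the Fourier transform of $g_x(t):=f(t)e^{-xt}\in L^2(0,\infty;H)$, so the Plancherel theorem for $H$-valued square-integrable functions (obtained from the scalar case by expanding in an orthonormal basis of $H$) yields
\[
\int_{-\infty}^\infty \|(\LL f)(x+iy)\|^2\,dy = 2\pi\int_0^\infty \|f(t)\|^2 e^{-2xt}\,dt .
\]
Replacing $x$ by $x+\epsilon$ (with $x\ge 0$, $\epsilon>0$), integrating against $d\nu(x)$, and using Tonelli to interchange the $x$- and $t$-integrals gives
\[
\int_{\overline{\CC_+}} \|(\LL f)(s+\epsilon)\|^2\,d\nu(x)\,dy
= \int_0^\infty \|f(t)\|^2 e^{-2\epsilon t}\Bigl(2\pi\int_0^\infty e^{-2xt}\,d\nu(x)\Bigr)\,dt
= \int_0^\infty \|f(t)\|^2 e^{-2\epsilon t}\,w(t)\,dt ,
\]
the last equality being exactly \eqref{eq:defw}. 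Since $e^{-2\epsilon t}\uparrow 1$ as $\epsilon\downarrow 0$, the monotone convergence theorem shows that the supremum over $\epsilon>0$ of the left-hand side equals $\int_0^\infty\|f(t)\|^2 w(t)\,dt=\|f\|^2$. Hence $\|\LL f\|_{A^2_\nu(H)}=\|f\|$, so $\LL$ maps into $A^2_\nu(H)$ isometrically (injectivity being automatic).

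Every interchange above is legitimate because the integrands are nonnegative, and the passage to the limit is immediate, so no real obstacle arises there. The only steps using more than the scalar theory are the vector-valued Plancherel identity and the absolute convergence/analyticity of the $H$-valued Laplace integral, and both reduce routinely to scalars via an orthonormal basis of $H$; accordingly I expect the main (and modest) effort to lie in tracking the normalisation constants so that the factor $2\pi$ produced by Plancherel matches the one built into \eqref{eq:defw}.
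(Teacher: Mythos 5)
Your argument is correct, and it takes a somewhat different (more self-contained) route than the paper. The paper's proof is a pure reduction: expand $f=\sum f_n e_n$ in an orthonormal basis of $H$, apply the scalar Zen-space isometry of \cite[Prop.~2.3]{jpp13} to each coordinate $f_n$, and sum the squares of the norms. You instead re-derive that scalar isometry directly in the $H$-valued setting: the vector-valued Plancherel identity on each vertical line $\re s = x+\epsilon$, followed by Tonelli against $d\nu(x)$ and the defining formula \eqref{eq:defw} for $w$, with the supremum over $\epsilon>0$ handled by monotone convergence. What the paper's approach buys is brevity, since the scalar result is quoted as a black box; what yours buys is transparency, since it exhibits exactly where the weight $w$ and the factor $2\pi$ come from, and it handles the $\sup_{\epsilon>0}$ in the Zen norm explicitly rather than implicitly through the cited proposition. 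Your preliminary estimates are sound: $w(t)\ge 2\pi e^{-2xt}\nu[0,x)$ and $w(t)\ge 2\pi e^{-xt}\nu[0,x/2)$ do follow from \eqref{eq:defw}, the doubling condition (for $\nu\neq 0$) forces $\nu[0,t)>0$ for all $t>0$, and these give both membership of $t\mapsto f(t)e^{-xt}$ in $L^1\cap L^2(0,\infty;H)$ and the analyticity of $\LL f$ on $\CC_+$; the only scalar-to-vector steps (Plancherel and analyticity of the $H$-valued integral) reduce to the scalar case by the same orthonormal-basis device the paper uses globally.
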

\beginpf
This result was given in the scalar case $H=\CC$ in \cite{jpp13} (see also \cite{jpp14}, where applications to admissibility and controllability were given, and \cite{zen09,zen10} for earlier related work). The general case follows using the standard method for proving the Hilbert space-valued
case of Plancherel's theorem \cite[Thm. 1.8.2]{ABHN}: let $(e_n)_{n=1}^\infty$ be an orthonormal basis for $H$, and
write 
\[
f(t) = \sum_{n=1}^\infty f_n(t) e_n,
\]
where $f_n \in L^2(0,\infty, w(t) dt, \CC)$. Then $F:=\LL f= \sum_{n=1}^\infty F_n e_n$, where
$F_n = \LL f_n \in A^2_\nu(\CC)$ and $\|f_n\|=\|F_n\|$ from \cite[Prop. 2.3]{jpp13}. 

Now $\|f\|^2 = \sum_{n=1}^\infty \|f_n\|^2$ and $\|F\|^2 = \sum_{n=1}^\infty \|F_n\|^2$,
so the result follows.
\endpf

In the case that $\nu=\delta_0$, 
we have the vectorial version of the well-known Paley--Wiener result linking $L^2(0,\infty)$ and
the Hardy space $H^2(\CC_+)$; for $\nu$ equal to Lebesgue measure, we recover the fact that the weighted signal space
$L^2(0,\infty, dt/t)$ is isometric (within a constant) to the Bergman space on $\CC_+$.\\

We now have a result for input--output stability.

\begin{thm}
Let $G \in H^\infty(\CC_+,\LH)$. Then the multiplication operator $M_G$ defined by
\[
(M_G F)(s) = G(s) F(s) \qquad (s \in \CC_+, \quad F \in A^2_\nu(H))
\]
is bounded on $A^2_\nu(H)$ with $\|M_G\| \le \|G\|_\infty$. In the case when
the Laplace transform \eqref{eq:ltla}
is surjective onto $A^2_\nu(H)$ we have equality.
\end{thm}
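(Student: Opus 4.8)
The plan is to prove the norm bound $\|M_G\| \le \|G\|_\infty$ directly from the definition of the Zen norm, and then to establish equality under the surjectivity hypothesis by pulling the problem back to the weighted $L^2$ space where one can test against approximate point masses (or rather, functions whose Laplace transforms concentrate near a chosen point of $\CC_+$).

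First I would prove boundedness. For $F \in A^2_\nu(H)$ and $\epsilon > 0$, since $G(s+\epsilon) \in \LH$ with $\|G(s+\epsilon)\| \le \|G\|_\infty$ for every $s \in \overline{\CC_+}$, we have the pointwise estimate $\|G(s+\epsilon)F(s+\epsilon)\|^2 \le \|G\|_\infty^2 \|F(s+\epsilon)\|^2$. Integrating against $d\nu(x)\,dy$ over $\overline{\CC_+}$ and taking the supremum over $\epsilon > 0$ gives $\|M_G F\|^2 \le \|G\|_\infty^2 \|F\|^2$; one should note in passing that $M_G F$ is indeed analytic and $H$-valued, and that the translated functions $(M_GF)(\cdot+\epsilon)$ are the ones being measured, exactly as in the definition of $\|\cdot\|_{A^2_\nu(H)}$.

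Next, equality. Assume $\LL : L^2(0,\infty,w(t)\,dt,H) \to A^2_\nu(H)$ is onto, hence (being isometric by the first theorem) a unitary. Fix $s_0 = x_0 + iy_0 \in \CC_+$ and a unit vector $\xi \in H$ with $\|G(s_0)\xi\|$ close to $\|G(s_0)\|$ (and $\|G(s_0)\|$ close to $\|G\|_\infty$ by choosing $s_0$ well; here one uses that $\|G\|_\infty = \sup_{s\in\CC_+}\|G(s)\|$). The idea is to build test functions $F_\lambda \in A^2_\nu(H)$, for $\lambda$ large, of the form $F_\lambda(s) = k_\lambda(s)\,\xi$ where $k_\lambda$ is a scalar function in $A^2_\nu(\CC)$ (obtained as a Laplace transform of something in $L^2(0,\infty,w(t)\,dt,\CC)$, using surjectivity in the scalar case) whose normalized mass concentrates at $s_0$, so that $\|M_G F_\lambda\|/\|F_\lambda\| \to \|G(s_0)\xi\|$. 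Concretely, one can take $k_\lambda = \LL g_\lambda$ where $g_\lambda(t) = e^{-\overline{s_0}t}\varphi_\lambda(t)$ for a suitable approximate-identity-type family $\varphi_\lambda$, exploiting that the reproducing-kernel structure of $A^2_\nu$ forces $F \mapsto F(s_0)$ to be continuous and that the normalized kernels concentrate. Combining $\|M_G F_\lambda\| \ge \|G(s_0)\xi\|\,\|F_\lambda\| - o(\|F_\lambda\|)$ with the already-proved upper bound yields $\|M_G\| = \|G\|_\infty$.

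The main obstacle is the construction of the concentrating test functions and the verification that $\|M_G F_\lambda\|/\|F_\lambda\| \to \|G(s_0)\xi\|$: one needs enough regularity of $G$ near $s_0$ (continuity suffices, which holds as $G$ is analytic on $\CC_+$) together with a quantitative statement that the normalized reproducing kernels $k_\lambda/\|k_\lambda\|$ converge to a point evaluation at $s_0$ in the appropriate weak sense against $\|G(\cdot)\xi\|^2$. This is where the doubling condition on $\nu$ and the precise relation \eqref{eq:defw} between $w$ and $\nu$ do their work; I would either cite the scalar reproducing-kernel computations from \cite{jpp13} or carry out the estimate by hand using $g_\lambda(t)=e^{-\overline{s_0}t}\mathbf{1}_{[0,\lambda]}(t)$ and letting $\lambda\to\infty$, checking that the "spread" of $|k_\lambda|^2$ around $s_0$ shrinks relative to its total mass. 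The vector-valued reduction itself is routine once the scalar concentrating family is in hand, since $M_G$ acts on the $H$-component through $G(s_0)$ in the limit.
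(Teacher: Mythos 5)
Your upper bound is exactly the paper's argument and is fine. The lower bound, however, has a genuine gap: the construction rests on producing analytic test functions $F_\lambda=k_\lambda\,\xi$ whose $A^2_\nu$-mass concentrates near a \emph{fixed interior} point $s_0\in\CC_+$, and this is impossible. Since $s\mapsto\|F(s)\|^2$ is subharmonic for analytic $F$, the measure $d\nu(x)\,dy$ cannot see an interior peak: in the Bergman case ($\nu$ Lebesgue) a Taylor-coefficient computation on a disc $D(s_0,\rho)\subset\CC_+$ shows that at most the fraction $(\delta/\rho)^2$ of $\int_{D(s_0,\rho)}|k|^2$ can lie in $D(s_0,\delta)$, uniformly over all analytic $k$, so a definite proportion of $\|F_\lambda\|^2$ always lives away from $s_0$, where you have no lower bound on $\|G(s)\xi\|$; in the Hardy case ($\nu=\delta_0$) the defining measure does not even charge a neighbourhood of $s_0$. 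Your concrete family makes the failure visible: with $g_\lambda=e^{-\overline{s_0}t}\mathbf{1}_{[0,\lambda]}$ one gets $k_\lambda(s)\to 1/(s+\overline{s_0})$ as $\lambda\to\infty$, a fixed Cauchy-type kernel spread over the whole half-plane, so there is no reason for $\|M_GF_\lambda\|/\|F_\lambda\|$ to approach $\|G(s_0)\xi\|$. (A test-function argument can be made to work by concentrating at the \emph{boundary}, letting $s_0\to i\RR$ through normalized reproducing kernels, but that needs boundary-value theory for operator-valued $H^\infty$ functions and kernel asymptotics, none of which is in your sketch.) A further warning sign: you never use surjectivity of $\LL$ except to manufacture elements of $A^2_\nu(H)$, which the isometry already provides; if your argument were correct it would prove equality unconditionally, which is more than the theorem asserts.

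The paper avoids any limiting argument by an exact adjoint identity: surjectivity makes $A^2_\nu=\LL L^2(0,\infty,w(t)\,dt)$ a reproducing kernel Hilbert space with kernels $K_z=\LL k_z$, where $k_z(t)=e^{-\overline{z}t}/w(t)$, and then one computes $M_G^*(K_z\otimes x)=K_z\otimes G(z)^*x$ for every $z\in\CC_+$ and $x\in H$, whence $\|M_G\|=\|M_G^*\|\ge\sup_z\|G(z)^*\|=\|G\|_\infty$. This is also precisely where the surjectivity hypothesis is used: it guarantees that point evaluations on all of $A^2_\nu(H)$ are implemented by these kernels. To repair your proof, either adopt this adjoint computation on the kernels $K_{s_0}\otimes\xi$ (no concentration needed, since $\|M_G^*(K_{s_0}\otimes\xi)\|=\|K_{s_0}\|\,\|G(s_0)^*\xi\|$ is exact), or carry out the boundary-concentration argument in full, which is substantially harder.
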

\beginpf
It is clear that
\[
\sup_{\epsilon>0} \int_{\overline{\CC_+}}\|G(s+\epsilon)\|^2 \|F(s+\epsilon)\|^2 \, d\nu(x) \, dy
\le \|G\|_\infty^2 \sup_{\epsilon>0} \int_{\overline{\CC_+}}  \|F(s+\epsilon)\|^2 \, d\nu(x) \, dy,
\]
so that $\|M_G\| \le \|G\|_\infty$.

For the converse inequality 
we begin by noting that by \eqref{eq:defw} we have the inequality $w(t) \ge 2\pi e^{-2\epsilon t} \nu[0,\epsilon)$
for every $\epsilon > 0$. 
Hence, if $z=x+iy \in \CC_+$, we have for $0 < \epsilon < x$ the inequality
\[
\int_0^\infty |e^{-\overline zt}/w(t)|^2 \,w(t) dt  \le \int_0^\infty e^{-2xt} \frac{1}{2\pi\nu[0,\epsilon)} e^{2\epsilon t} \, dt < \infty.
\]
Thus the function $k_z:t \mapsto e^{-\overline zt}/w(t)$ lies in $L^2(0,\infty, w(t) dt)$
for every $z \in \CC_+$, and 
we have
\[
\LL f(z) = \langle f, k_z \rangle_{L^2(0,\infty, w(t) dt)}
\]
for all $f \in L^2(0,\infty, w(t) dt)$. That is, $A^2_\nu=\LL L^2(0,\infty, w(t) dt)$ is 
a reproducing kernel Hilbert space with kernel $K_z:=\LL k_z$ (see, for example, \cite{paulsen} for more
on such spaces).
For $x \in H$ we write $K_z \otimes x$ for the function $s \mapsto K_z(s)x$ in
$A^2_\nu(H)$ and note that for a function $F \in A^2_\nu(H)$ we have 
$\langle F, K_z \otimes x\rangle_{A^2_\nu(H)} = \langle F(z), x \rangle_H$.
Moreover $\|K_z \otimes x\|_{A^2_\nu(H)}=\|K_z\|_{A^2_\nu} \|x\|_H$.

Now for $F  \in A^2_\nu(H)$  and $G \in H^\infty(\LH)$ we have,
for every $x \in H$ and $z \in \CC_+$, that 
\begin{eqnarray*}
\langle F,M_G^* ( K_z\otimes x) \rangle_{A^2_\nu(H)} &=& \langle M_GF,  K_z\otimes x \rangle_{A^2_\nu(H)} = 
\langle G(z)F(z),x \rangle_H \\
& = & 
\langle F(z), G(z)^* x \rangle_H =
\langle F, K_z\otimes G(z)^*x \rangle,
\end{eqnarray*}
and so $M_G^* (K_z \otimes x)=  K_z\otimes G(z)^* x$, and $\|M_G\|=\|M_G^*\| \ge \|G^*\|_\infty= \|G\|_\infty$.
\endpf

This result will be fundamental to the analysis in the next section.

\section{Stability of retarded delay systems}\label{sec:3}

\subsection{General results}

In Section \ref{sec:2} we gave necessary and sufficient conditions for an operator-valued function to give a bounded operator on various function spaces.
As indicated in the introduction, we shall apply these results in  the context of delay equations.
We begin by reviewing the classical (scalar) case, before providing a generalization to the
operator-valued case.

The 1980s result of Walton and Marshall on the location 
of the zeros of a scalar function $G(s)=P(s)+Q(s)e^{-sh}$ is the following.

\begin{prop}\label{prop:11}
\cite{WM},\cite[p.132]{partington2004}
	Let $P(s)$ and $Q(s)$ be real polynomials. If
	\begin{equation}
	 P(s)+Q(s){\rm e} ^{-sh},
	\label{formula}\end{equation}  where $h>0$, has a zero at point $s\in i\mathbb{R}$,  then such an 
	$s$ satisfies the equation 
	\begin{equation} P(s)P(-s) = Q(s)Q(-s). \label{prop1}\end{equation}	
	Moreover, if  $P(s), Q(s)$ are not zero at $s$, then the direction in which the zeros cross the axis with increasing $h$ is given by 
	$$\sgn \re\frac{ds}{dh}=\sgn \re \frac{1}{s} \left[\frac{Q'(s)}{Q(s)}-\frac{P'(s)}{P(s)}\right].$$
\end{prop}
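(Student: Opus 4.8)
The plan is to split the statement into two parts: the algebraic characterization of imaginary zeros via \eqref{prop1}, and the formula for the crossing direction. For the first part, I would use the fact that $P$ and $Q$ have \emph{real} coefficients, so that $\overline{P(s)}=P(\overline s)$ and similarly for $Q$; in particular, for $s=i\omega$ on the imaginary axis one has $P(-s)=P(\overline s)=\overline{P(s)}$ and $Q(-s)=\overline{Q(s)}$. If $P(s)+Q(s)e^{-sh}=0$ at $s=i\omega$, then $e^{-sh}=-P(s)/Q(s)$ (the case $Q(s)=0$ forces $P(s)=0$ as well, and \eqref{prop1} holds trivially). Since $|e^{-sh}|=|e^{-i\omega h}|=1$, taking moduli gives $|P(s)|=|Q(s)|$, i.e. $P(s)\overline{P(s)}=Q(s)\overline{Q(s)}$, which by the reality observation is exactly $P(s)P(-s)=Q(s)Q(-s)$. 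That disposes of \eqref{prop1}.

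For the direction of crossing, the idea is to treat $s$ as an implicit function of $h$ defined near a crossing point by $F(s,h):=P(s)+Q(s)e^{-sh}=0$, and differentiate. Assuming $P(s),Q(s)\neq0$ at the crossing, one checks $\partial F/\partial s\neq0$ (this uses $\deg P>\deg Q$ or at least a genericity hypothesis so the zero is simple), so the implicit function theorem applies and $ds/dh$ exists. Implicit differentiation gives
\[
\frac{ds}{dh}=-\frac{\partial F/\partial h}{\partial F/\partial s}
=-\frac{-s\,Q(s)e^{-sh}}{P'(s)+Q'(s)e^{-sh}-h\,Q(s)e^{-sh}}.
\]
Now substitute the relation $Q(s)e^{-sh}=-P(s)$, valid at the crossing, into both numerator and denominator. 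The numerator becomes $-sP(s)$, and the denominator becomes $P'(s)-Q'(s)P(s)/Q(s)+hP(s)$. After dividing through and simplifying one obtains
\[
\frac{1}{s}\cdot\frac{ds}{dh}
=\frac{1}{P'(s)/P(s)-Q'(s)/Q(s)+h}\cdot\frac{1}{?}
\]
— more precisely, a short manipulation shows $\dfrac{dh}{ds}=\dfrac{1}{s}\left[\dfrac{P'(s)}{P(s)}-\dfrac{Q'(s)}{Q(s)}\right]-\dfrac{h}{s}$, so that $\dfrac{1}{s}\dfrac{ds}{dh}$ is the reciprocal of $\dfrac{P'(s)}{P(s)}-\dfrac{Q'(s)}{Q(s)}-h$ divided by $s$; taking real parts and using that $s=i\omega$ is purely imaginary kills the $h/s$ contribution to the real part (since $h/s$ is purely imaginary), so $\sgn\re\dfrac{ds}{dh}=\sgn\re\dfrac{1}{s}\left[\dfrac{Q'(s)}{Q(s)}-\dfrac{P'(s)}{P(s)}\right]$, using that the real part of $z$ and of $1/\bar z$ (equivalently of $1/z$ up to modulus) have the same sign.

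The step I expect to be the main obstacle is the bookkeeping in the last paragraph: one must be careful that $\re(1/w)$ and $\re(w)$ share a sign (true, since $\re(1/w)=\re(\bar w)/|w|^2$), and that the term involving $h$ genuinely drops out of the real part — this is where the hypothesis $s\in i\RR$ is used in an essential way, since $h/s$ is then purely imaginary and contributes nothing to $\re(ds/dh)$. I would also flag explicitly that the simplicity of the zero (needed to invoke the implicit function theorem) is automatic in the retarded case $\deg P>\deg Q$ but should be noted as a standing assumption; with that in hand, the computation is routine and the sign of $\re(ds/dh)$ is read off directly from the displayed formula.
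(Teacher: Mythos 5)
Your argument is essentially the paper's own: Proposition \ref{prop:11} itself is quoted from Walton--Marshall, but the paper proves its complex generalization (Proposition \ref{prop:2.3}) by exactly your route --- conjugation/elimination for \eqref{prop1} (equivalent to your modulus argument, since $|e^{-i\omega h}|=1$ and $P(-s)=\overline{P(s)}$ on $i\RR$), then implicit differentiation, substitution of $Q(s)e^{-sh}=-P(s)$, the identity $\sgn\re u=\sgn\re u^{-1}$, and the observation that $h/s$ is purely imaginary. Two small blemishes: your intermediate formula for $dh/ds$ has the bracket with the wrong sign (it should read $\frac{1}{s}\bigl[\frac{Q'(s)}{Q(s)}-\frac{P'(s)}{P(s)}\bigr]-\frac{h}{s}$, which is what your final conclusion in fact uses), and one display contains a leftover ``$?$'' placeholder. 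Also, simplicity of the imaginary-axis zero is not automatic from $\deg P>\deg Q$; it is a genericity assumption implicit in the statement (the paper likewise differentiates formally without comment), so state it as a hypothesis rather than a consequence of retardedness.
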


Our first result analyses the $H^\infty$ stability of an operator-valued transfer function
$(P(s)+Q(s)e^{-sh}A)^{-1}$, linking it to
to properties of the function $P(s)+\lambda Q(s)e^{-sh}$, where $\lambda$ is
in the spectrum $\sigma(A)$ of $A$. 

\begin{thm}\label{theorem H infty}
	If $A$ is a bounded operator on a Banach space $X$,  and $h\geq 0$ and   $P(s), Q(s)$   complex polynomials with  ~  $\deg P > \deg Q$,  then the following three statements are equivalent:
	\begin{enumerate}[(i)]
		\item $\left(P(s)I+Q(s)A{\rm e}^{-sh} \right)^{-1}\in H^\infty(\LX).$
		\item $\left(P(s)I+Q(s)A{\rm e}^{-sh} \right)$ is invertible $\forall s \in \overline{\mathbb{C}}_+.$  
		\item $P(s)I+\lambda Q(s) {\rm e}^{-sh} \neq 0 ~~\forall s \in \overline{\mathbb{C}}_+, ~\forall \lambda \in \sigma(A).$ 
	\end{enumerate}
\end{thm}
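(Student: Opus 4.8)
The plan is to prove the cycle of implications (i)$\Rightarrow$(ii)$\Rightarrow$(iii)$\Rightarrow$(i), with the retardedness hypothesis $\deg P > \deg Q$ entering decisively only in the last step (and in controlling behaviour at infinity throughout).

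The implication (i)$\Rightarrow$(ii) is essentially immediate: if $F(s):=(P(s)I+Q(s)Ae^{-sh})^{-1}$ lies in $H^\infty(\LX)$, then in particular $P(s)I+Q(s)Ae^{-sh}$ has a bounded two-sided inverse for every $s\in\CC_+$, hence is invertible there; the only point needing care is the boundary $s\in i\RR$. Here I would argue by continuity: since $\deg P>\deg Q$, the coefficient of $A$, namely $Q(s)e^{-sh}/P(s)$, tends to $0$ as $|s|\to\infty$ in $\overline{\CC_+}$ (for $h\ge 0$ the exponential factor has modulus $\le 1$ there), so $P(s)^{-1}(P(s)I+Q(s)Ae^{-sh}) = I + (Q(s)/P(s))e^{-sh}A \to I$ uniformly at infinity; combined with the uniform bound $\|F\|_\infty<\infty$ on $\CC_+$ and continuity up to the boundary away from the (finitely many) zeros of $P$, one extends invertibility to all of $\overline{\CC_+}$. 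I should double-check the points where $P$ vanishes on $i\RR$: at such a point $s_0$, invertibility of $P(s_0)I + Q(s_0)e^{-s_0h}A = Q(s_0)e^{-s_0h}A$ would force $A$ invertible, and then one handles it directly; alternatively one notes $F$ bounded near $s_0$ forces $Q(s_0)\ne 0$ and $A$ invertible.

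The implication (ii)$\Rightarrow$(iii) is the spectral-mapping heart of the argument. Fix $s\in\overline{\CC_+}$. If $P(s)=0$ then (ii) says $Q(s)e^{-sh}A$ is invertible, so $Q(s)\ne 0$ and $0\notin\sigma(A)$, whence $P(s)I+\lambda Q(s)e^{-sh} = \lambda Q(s)e^{-sh}\ne 0$ for all $\lambda\in\sigma(A)$; done. If $P(s)\ne 0$, write $P(s)I+Q(s)e^{-sh}A = P(s)\bigl(I - \mu A\bigr)$ with $\mu := -Q(s)e^{-sh}/P(s)$. By (ii), $I-\mu A$ is invertible, i.e. $1/\mu\notin\sigma(A)$ (when $\mu\ne0$; when $\mu=0$ there is nothing to prove since then $P(s)I+\lambda Q(s)e^{-sh}=P(s)\ne0$). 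Thus for every $\lambda\in\sigma(A)$ we have $\lambda\ne 1/\mu$, i.e. $1-\mu\lambda\ne 0$, i.e. $P(s)+\lambda Q(s)e^{-sh}\ne 0$. This is just the spectral mapping theorem for the affine function $z\mapsto P(s)+Q(s)e^{-sh}z$ applied to $A$, and it is reversible, which will give (iii)$\Rightarrow$ pointwise invertibility for free.

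The implication (iii)$\Rightarrow$(i) is the one I expect to be the main obstacle, since it must produce a \emph{uniform} bound on $\overline{\CC_+}$. The strategy: (iii) plus the spectral mapping remark gives that $P(s)I+Q(s)e^{-sh}A$ is invertible for each $s\in\overline{\CC_+}$, so $F$ is a well-defined analytic $\LX$-valued function on $\CC_+$, continuous up to $i\RR$; it remains to bound $\sup\|F(s)\|$. Split $\overline{\CC_+}$ into a large compact part and its complement. On $\{|s|\ge R\}$ with $R$ large, retardedness gives $\|(Q(s)/P(s))e^{-sh}A\| \le 1/2$, so $F(s) = P(s)^{-1}(I+(Q(s)/P(s))e^{-sh}A)^{-1}$ is bounded by $2/|P(s)|$, which is itself bounded for $|s|$ large since $\deg P\ge 1$; so $F$ is bounded (in fact $\to 0$) at infinity. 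On the compact set $\{|s|\le R\}\cap\overline{\CC_+}$, continuity of $s\mapsto F(s)$ would give a bound provided $F$ stays finite — which is exactly the content of invertibility at every such $s$, including on the boundary, together with continuity of inversion. One subtlety: $F$ a priori is only defined where $P(s)I+Q(s)e^{-sh}A$ is invertible, and near a zero $s_0$ of $P$ on the compact part one must check $F$ does not blow up; but as in step one, invertibility at $s_0$ forces $A$ invertible and $Q(s_0)\ne0$, and then $F(s)=(P(s)I+Q(s)e^{-sh}A)^{-1}$ extends continuously through $s_0$ because the matrix-valued map is continuous and invertible there. Having bounded $F$ on both pieces, $F\in H^\infty(\CC_+,\LX)$ by the maximum principle (or simply by taking the sup over the two regions). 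I would also remark that the equivalence fails in the neutral case $\deg P=\deg Q$ precisely because the "$\le 1/2$" estimate at infinity is lost, which is why the hypothesis $\deg P>\deg Q$ is indispensable.
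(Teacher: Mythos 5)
Your proposal is correct and follows essentially the same route as the paper: the cycle (i)$\Rightarrow$(ii)$\Rightarrow$(iii)$\Rightarrow$(i), with the spectral mapping theorem for the affine map $z \mapsto P(s)+Q(s)e^{-sh}z$ giving the equivalence of pointwise invertibility with (iii), and the uniform bound for (iii)$\Rightarrow$(i) obtained by splitting $\overline{\CC}_+$ at $|s|=R$, where $\deg P > \deg Q$ controls the unbounded part. The only minor differences are that you bound the compact part via continuity of inversion on the group of invertible operators, whereas the paper runs a sequential-compactness contradiction showing $0$ would lie in the (approximate point) spectrum at some limit point $s_0$, and that you spell out the boundary-invertibility step (i)$\Rightarrow$(ii), which the paper dismisses as clear.
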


\begin{proof}
	$(i)\implies (ii):$ This is clear.
	
	$(ii)\implies (iii):$	the operator $\left(P(s)I+Q(s)A{\rm e}^{-sh} \right)$ is invertible 
	  if and only if $ 0 \notin \sigma\left[P(s)I+Q(s)A{\rm e}^{-sh} \right] $; but for fixed $s$, we get
	  \[
	  \sigma\left[P(s)I+Q(s)A{\rm e}^{-sh} \right]=\left\{P(s)I+Q(s)\lambda{\rm e}^{-sh}: ~\lambda\in \sigma(A)\right\},
	  \]
	   which means that $P(s)I+Q(s)\lambda{\rm e}^{-sh} \neq 0 ~~\forall s \in \mathbb{C}_+, ~\forall \lambda \in \sigma(A).$ 
	
	$(iii)\implies (i):$ Suppose $P(s)I+Q(s)\lambda{\rm e}^{-sh} \neq 0 ~~\forall s \in \overline{\mathbb{C}}_+, ~\forall \lambda \in \sigma(A)$ and so  $\left(P(s)I+Q(s)A{\rm e}^{-sh} \right)$ is invertible $\forall s \in \overline{\mathbb{C}}_+.$ We show that the inverse is bounded as a function of $s$.
	
	First: there is an  $R>0$ such that for $s \in \overline{\mathbb{C}}_+$ with  $|s|>R$, we have 
	$$|P(s)|>|Q(s)|\|A\||{\rm e}^{-sh}|+1,$$
	and so for $x\in X$ we get
	$$\|P(s)x\|>\left(|Q(s)|\|A\||{\rm e}^{-sh}|\right)\|x\|+\|x\|,$$
	and so 
	\begin{align*}
	\|P(s)Ix+Q(s)A{\rm e}^{-sh}x\|&\geq	\|P(s)x\|-\left(|Q(s)|\|A\||{\rm e}^{-sh}|\right)\|x\|\geq\|x\|.
	\end{align*}
	That means
	$$\|\left(P(s)I+Q(s)A{\rm e}^{-sh}\right)^{-1}\| \leq 1,$$
	and so ~  $\left(P(s)I+Q(s)A{\rm e}^{-sh} \right)^{-1}$ is bounded for $|s|> R,~ s \in \overline{\mathbb{C}}_+$.
	
	Second, to prove $\left(P(s)I+Q(s)A{\rm e}^{-sh} \right)^{-1}$ is uniformly bounded for $|s|\leq  R,~ s \in \overline{{\mathbb{C}}}_+$, we suppose not, so  $\exists (x_n) \subset X,~ \|x_n\|=1$ and a sequence $(s_n) \subset S$ where $S=\{s\in \mathbb{\overline{C}}_+: |s|\leq R\}$ such that 
	$$\left(P(s_n)I+Q(s_n)A{\rm e}^{-s_nh} \right)x_n \rightarrow 0,$$  
	and because $S$ is a compact set then there is a subsequence  $(s_{n_k})_{k \geq 0}$ and $s_0\in S$ such that $(s_{n_k})\rightarrow s_0$. Now 
	\[
	\|P(s_n)I+Q(s_n)A{\rm e}^{-s_nh} -P(s_0)I-Q(s_0)A{\rm e}^{-s_0h} \|\rightarrow 0
	\]
	 and so $\left(P(s_0)I+Q(s_0)A{\rm e}^{-s_0h} \right)x_n \rightarrow 0$, which means that 
	$$0\in \sigma\left(P(s_0)I+Q(s_0)A{\rm e}^{-s_0h} \right),$$
	so there exists a $ \lambda\in \sigma(A)$ such that $P(s_0)I+Q(s_0)\lambda{\rm e}^{-s_0h} =0.$
	
\end{proof}	

\begin{rem}{\rm
(i) The result does not hold in general if $A$ is unbounded (and in this case the linear system may even be
destabilised by an arbitrarily small delay). For example, if $A$ is a diagonal operator on a Hilbert space  
with orthonormal eigenvectors and eigenvalues $\lambda_n=ni + 1/n$ $(n \ge 1)$, then
$s+ \lambda_n$ has no zeros in $\CC_+$ but $(sI+A)^{-1}$ is unbounded on $\CC_+$.\\

(ii)
The location of the poles of a neutral delay
system ($\deg P=\deg Q$) does not determine its stability; as the following
example  indicates \cite{BonnetPartington2002}.

	Consider $\ds G(s) = \frac{1}{s + 1 + s{\rm e}^{-s}}$. If
	$\re s>0$ then we cannot have ${\rm e}^{-s} = -1 -\displaystyle\frac{1}{s}$, since the
	left-hand side has modulus ${<1}$ and the right-hand side
	has modulus strictly ${>1}$; thus this system has no poles in
	$\mathbb{C}_+$, nor indeed on $i\mathbb{R}$ (as is easily verified), although it
	does have a sequence of poles $z_n$ with $\imag z_n  \approx  (2n + 1)\pi$
	and $\re z_n \rightarrow 0$. The system is not   stable, as an
	analysis of its values at $s = i\left[(2n + 1)\pi + \displaystyle\frac{1}{(2n + 1)\pi}\right],
	n\in \mathbb{Z} $, shows that it is not   in $H^\infty$.
	}
\end{rem}

Since  real matrices may have complex spectrum, we require a complex version of Proposition
\ref{prop:11}, as follows:

\begin{prop}\label{prop:2.3}
		Let $P(s)$ and $Q(s)$ be real polynomials. If
	$$P(s)+\lambda  Q(s){\rm e} ^{-sh}$$
    has a zero for some $h \in \RR$, $\lambda\in \mathbb{C}$ and  $s\in i\mathbb{R}$,  then such an $s$ satisfies the equation 
\beq\label{eq:pplqq}
P(s)P(-s) =|\lambda|^2 Q(s)Q(-s).
\eeq	
	Moreover if $P(s), Q(s)$ are not zero at $s$ and $\lambda \ne 0$, then we have 
\beq
\label{eq:cross}
\sgn \re\frac{ds}{dh}=\sgn \re \frac{1}{s} \left[\frac{Q'(s)}{Q(s)}- \frac{P'(s)}{P(s)}\right].
\eeq
\end{prop}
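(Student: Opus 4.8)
The plan is to follow the scalar Walton--Marshall argument behind Proposition \ref{prop:11}, carrying the parameter $\lambda$ through the computation; the one new feature is that the constant $|\lambda|^2$ appears in place of $1$ in \eqref{eq:pplqq}.

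For \eqref{eq:pplqq}: suppose $s=i\omega\in i\RR$ and $P(s)+\lambda Q(s){\rm e}^{-sh}=0$, i.e.\ $P(s)=-\lambda Q(s){\rm e}^{-sh}$. Multiplying this identity by its complex conjugate, and using that $P,Q$ have real coefficients (so $\overline{P(i\omega)}=P(-i\omega)$, and likewise for $Q$) together with $|{\rm e}^{-sh}|=1$ on $i\RR$, gives $P(s)P(-s)=|\lambda|^2\,Q(s)Q(-s)$. (If $\lambda=0$ the conclusion is immediate, since then $P(s)=0$.)

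For the crossing formula \eqref{eq:cross}: I would take a branch $s=s(h)$ of zeros defined implicitly near the crossing by $F(s,h):=P(s)+\lambda Q(s){\rm e}^{-sh}=0$ and differentiate, $ds/dh=-(\partial F/\partial h)/(\partial F/\partial s)$, where
\[
\frac{\partial F}{\partial h}=-s\lambda Q(s){\rm e}^{-sh},\qquad \frac{\partial F}{\partial s}=P'(s)+\lambda Q'(s){\rm e}^{-sh}-h\lambda Q(s){\rm e}^{-sh}.
\]
Substituting the defining relation $\lambda Q(s){\rm e}^{-sh}=-P(s)$ to eliminate the exponential (legitimate since $Q(s)\ne0$) collapses these to $\partial F/\partial h=sP(s)$ and $\partial F/\partial s=P'(s)-P(s)Q'(s)/Q(s)+hP(s)$, whence
\[
\frac{dh}{ds}=\Bigl(\frac{ds}{dh}\Bigr)^{-1}=-\frac1s\left[\frac{P'(s)}{P(s)}-\frac{Q'(s)}{Q(s)}+h\right].
\]
Since $\re(1/z)$ and $\re z$ always share the same sign, $\sgn\re(ds/dh)=\sgn\re(dh/ds)$; and on $i\RR$ with $h$ real the term $-h/s$ is purely imaginary, so it contributes nothing to the real part, leaving exactly \eqref{eq:cross}.

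The step to watch is the appeal to the implicit function theorem: it needs $\partial F/\partial s\ne0$ at the crossing --- equivalently, $s$ is a simple zero for that value of $h$ --- which is the (implicit) transversality hypothesis also present in the scalar statement; in the degenerate case $ds/dh$ fails to be finite and such a crossing must be handled separately. The same remark applies to $s=0$, which is never a genuine crossing since $P(0)+\lambda Q(0)=0$ is an $h$-independent condition. Apart from this, the argument is a routine computation, identical in structure to the $\lambda=1$ case of Proposition \ref{prop:11}.
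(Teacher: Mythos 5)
Your proof is correct and follows essentially the same route as the paper: conjugating (equivalently, multiplying by the conjugate) to eliminate the exponential for \eqref{eq:pplqq}, then differentiating the defining relation in $h$, substituting $\lambda Q(s){\rm e}^{-sh}=-P(s)$, and using $\sgn\re u=\sgn\re u^{-1}$ together with the fact that $h/s$ is purely imaginary for \eqref{eq:cross}. Your added remark about the transversality condition $\partial F/\partial s\ne 0$ makes explicit a hypothesis the paper leaves implicit, but the argument is otherwise the same.
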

\beginpf
From $P(s)+\lambda Q(s)e^{-sh}=0$, we obtain by taking complex conjugates and noting that $\overline s=-s$, that
$P(-s)+\overline\lambda Q(-s)e^{sh}=0$. This establishes \eqref{eq:pplqq} on eliminating the exponential term from the equations.

Next,
elementary calculus (performing differentiation with respect to $h$) gives us 
\[
(P'(s)+\lambda Q'(s) e^{-sh}-h\lambda Q(s)e^{-sh}) \frac{ds}{dh} - \lambda sQ(s) e^{-sh} = 0,
\]
and using the fact that $\lambda e^{-sh}=-P(s)/  Q(s)$ gives us
\[
\left[ P'(s)-\frac{P(s)Q'(s)}{Q(s)}+P(s)h  \right]  \frac{ds}{dh} = -sP(s) 
\]
or
\[
\frac{ds}{dh}    = -s\left[\frac{ P'(s)}{P(s)}-\frac{ Q'(s)}{Q(s)}+ h \right] ^{-1}
\]
Now $\sgn \re u= \sgn \re u^{-1}$ for every $u \ne 0$, and $h/s$ is purely imaginary, so the result follows.
\endpf

\begin{rem}{\rm
Although it is physically less relevant, we may also consider the case of complex polynomials $P$ and $Q$.
In this case \eqref{eq:pplqq} is replaced by $|P(s)|^2=|\lambda|^2|Q(s)|^2$.
}
\end{rem}

\subsection{Matrices, normal and subnormal operators}

In many of the systems occurring in applications we can determine the spectrum of $A$, and
this makes the analysis above easier to perform explicitly. For systems with inputs and
outputs in $\CC^n$ or $\RR^n$, rather than general Hilbert spaces, we have that $A$ is
a matrix, while, as we explain below, there are  classes of operators on
infinite-dimensional spaces for which the analysis can also be performed directly.\\

A classical theorem of Schur (see, for example, \cite[p. 79]{HornJohnson1990}) asserts that
any finite square matrix can be transformed into an upper triangular matrix by conjugation with a unitary matrix.
This means that when working with
norm estimates for $G(s)$ as in \eqref{eq:gs} we may perform calculations using upper triangular matrices
(and the spectrum is the set of diagonal elements when the matrix is in triangular form).\\

Another important class of infinite-dimensional operators $A$ is the class of normal operators,
such that $A^*A=AA^*$. These are unitarily equivalent to multiplication operators
$M_g: f \mapsto fg$ on an $L^2(\Omega, \mu)$ space, and the spectrum is simply the closure
of the range of $M_g$. We shall illustrate this by an example in Section \ref{sec:ex}.\\

Going beyond that we may consider the class of subnormal operators $A$, for which good references are
\cite{bram,conway}. These may be regarded up to unitary equivalence as restrictions of normal operators $N$ to 
invariant subspaces $\MM$ (that is, $N(\MM) \subseteq \MM)$; as for example the unilateral shift operator,
or multiplication by the independent variable on the Hardy space $H^2$ of the disc.
Such operators have been considered in a systems-theory context in \cite{JMPW}.

A subnormal operator $A$ has a minimal normal extension $N$ (that is, no proper restriction of $N$ is a normal
extension of $A$). We mention this because we then have $\sigma(N) \subseteq \sigma(A) \subseteq \sigma(N) \cup H(N)$,
where $H(N)$ is the union of the bounded components of $\CC \setminus \sigma(N)$ (that is, the ``holes'' in $\sigma(N)$).

We therefore have the following immediate Corollary of Theorem \ref{theorem H infty}.
\begin{cor}
	Under the hypotheses of Theorem \ref{theorem H infty} if $N$ is the minimal normal extension of $A$ and $P(s)+Q(s)\lambda{\rm e}^{-sh}\neq 0$  for all $\lambda \in \sigma(N)\cup H(N)$ then  $(P(s)+Q(s)A{\rm e}^{-sh})^{-1} \in H^\infty(\LX)$. Conversely, if  $(P(s)+Q(s)A{\rm e}^{-sh})^{-1} \in H^\infty(\LX)$ then $P(s)+Q(s)\lambda{\rm e}^{-sh}\neq 0$  for all $\lambda \in \sigma(N)$.
\end{cor}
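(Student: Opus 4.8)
The plan is to deduce this directly from Theorem \ref{theorem H infty} together with the spectral inclusions recorded just before the Corollary, namely $\sigma(N) \subseteq \sigma(A) \subseteq \sigma(N) \cup H(N)$, where $N$ is the minimal normal extension of the subnormal operator $A$ and $H(N)$ is the union of the bounded components of $\CC \setminus \sigma(N)$. The point is that Theorem \ref{theorem H infty} characterises membership in $H^\infty(\LX)$ via the non-vanishing condition $P(s)+Q(s)\lambda e^{-sh} \neq 0$ for all $s \in \overline{\CC_+}$ and all $\lambda \in \sigma(A)$; since we do not in general know $\sigma(A)$ exactly but only that it is sandwiched between $\sigma(N)$ and $\sigma(N) \cup H(N)$, we get a sufficient condition by imposing the non-vanishing on the larger set $\sigma(N) \cup H(N)$ and a necessary condition by testing it only on the smaller set $\sigma(N)$.

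In detail: for the first (sufficiency) assertion, suppose $P(s)+Q(s)\lambda e^{-sh} \neq 0$ for all $s \in \overline{\CC_+}$ and all $\lambda \in \sigma(N) \cup H(N)$. Since $\sigma(A) \subseteq \sigma(N) \cup H(N)$, the same non-vanishing holds in particular for all $\lambda \in \sigma(A)$, which is condition (iii) of Theorem \ref{theorem H infty}; hence (i) holds, i.e.\ $(P(s)I+Q(s)Ae^{-sh})^{-1} \in H^\infty(\LX)$. For the converse (necessity), if $(P(s)I+Q(s)Ae^{-sh})^{-1} \in H^\infty(\LX)$ then by the implication (i)$\implies$(iii) of Theorem \ref{theorem H infty} we have $P(s)+Q(s)\lambda e^{-sh} \neq 0$ for all $s \in \overline{\CC_+}$ and all $\lambda \in \sigma(A)$; since $\sigma(N) \subseteq \sigma(A)$, this in particular gives the non-vanishing for all $\lambda \in \sigma(N)$.

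There is essentially no obstacle here — the Corollary is genuinely ``immediate'' once one has Theorem \ref{theorem H infty} and the standard spectral-inclusion facts for minimal normal extensions of subnormal operators (for which the references \cite{bram,conway} are cited). The only thing worth a word of care is that the hypotheses of Theorem \ref{theorem H infty} are inherited: $A$ is still a bounded operator on a Banach space (indeed a Hilbert space, being subnormal), $h \geq 0$, and $\deg P > \deg Q$, so the theorem applies verbatim. I would therefore present the proof as two short lines invoking the appropriate implications of Theorem \ref{theorem H infty} applied once to the over-approximation $\sigma(N) \cup H(N) \supseteq \sigma(A)$ and once to the under-approximation $\sigma(N) \subseteq \sigma(A)$, with no computation required.

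\begin{proof}
Since $A$ is subnormal it acts on a Hilbert space, and the hypotheses $h \geq 0$ and $\deg P > \deg Q$ are those of Theorem \ref{theorem H infty}, which therefore applies. Let $N$ be the minimal normal extension of $A$, so that
\[
\sigma(N) \subseteq \sigma(A) \subseteq \sigma(N) \cup H(N).
\]
If $P(s)+Q(s)\lambda e^{-sh} \neq 0$ for all $s \in \overline{\CC_+}$ and all $\lambda \in \sigma(N) \cup H(N)$, then in particular this holds for all $\lambda \in \sigma(A)$, which is statement (iii) of Theorem \ref{theorem H infty}; hence statement (i) holds, that is, $(P(s)I+Q(s)Ae^{-sh})^{-1} \in H^\infty(\LX)$. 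Conversely, if $(P(s)I+Q(s)Ae^{-sh})^{-1} \in H^\infty(\LX)$, then by the implication (i)$\implies$(iii) of Theorem \ref{theorem H infty} we have $P(s)+Q(s)\lambda e^{-sh} \neq 0$ for all $s \in \overline{\CC_+}$ and all $\lambda \in \sigma(A)$; since $\sigma(N) \subseteq \sigma(A)$, this gives $P(s)+Q(s)\lambda e^{-sh} \neq 0$ for all $\lambda \in \sigma(N)$.
\end{proof}
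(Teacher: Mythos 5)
Your proof is correct and follows exactly the route the paper intends: the paper states the result as an ``immediate Corollary'' of Theorem \ref{theorem H infty} combined with the spectral inclusions $\sigma(N) \subseteq \sigma(A) \subseteq \sigma(N) \cup H(N)$ recorded just before it, which is precisely your argument. Nothing is missing.
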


\subsection{Examples}
\label{sec:ex}

We conclude by showing some examples in which the results of Sections \ref{sec:2}
and \ref{sec:3} can be applied directly to stability analysis.

\begin{ex}{\rm 
	Let $A=\begin{bmatrix}
	1       & 0&0 \\
	0&2      & 1  \\
	0&0&2\\ \end{bmatrix}$. To apply
	Theorem \ref{theorem H infty},
	 with $P(s)=s, Q(s)=1$ and the eigenvalues $\lambda_k\in \{1,2\} $ we   check the zero sets of $s+e^{-sh}$ and $s+2e^{-sh}$.
The equations \eqref{eq:pplqq} giving the points where zeros cross the axis with increasing $h$ 
are $-s^2=1$ and $-s^2=4$, respectively, and 
from $s+ \lambda e^{-sh}=0$
we arrive at stability ranges $[0,\pi/2)$ and $[0,\pi/4)$, respectively.
Thus the system \eqref{eq:diffeq} is stable for $0 \le h < \pi/4$ (it is easily verified using
\eqref{eq:cross} that poles move from left to right as $h$ increases).
}
\end{ex}

\begin{ex}{\rm 
	For the normal matrix	$A=\begin{bmatrix}
	1       & -1 \\
	1      & 1  \\\end{bmatrix}$ we have the transformation  
	$$T=R^{-1}\begin{bmatrix}
	1       & -1 \\
	1      & 1  \\\end{bmatrix} R=\begin{bmatrix}
	1 +i      & 0 \\
	0      & 1-i  \\\end{bmatrix},$$	
	
where 	$R=\frac{1}{\sqrt 2}\begin{bmatrix}
1       & -i \\
-i      & 1  \\\end{bmatrix}$, which is unitary.

Now we have to consider the equation \eqref{eq:pplqq}, obtaining $-s^2=2$ and for each eigenvalue $\lambda$ we
perform the analysis for $s+\lambda e^{-sh}=0$. Because the $\lambda$ are not real, we obtain different
values of $h$ for each eigenvalue, which we summarise now:
\begin{itemize}
\item For $\lambda=1+i$, $s=i\sqrt{2}$, we have $h=\frac{3\pi}{4\sqrt 2}$.
\item  For $\lambda=1+i$, $s=-i\sqrt{2}$, we have $h=\frac{\pi}{4\sqrt 2}$.
\item For $\lambda=1-i$, $s=i\sqrt{2}$, we have $h=\frac{\pi}{4\sqrt 2}$.
\item  For $\lambda=1-i$, $s=-i\sqrt{2}$, we have $h=\frac{3\pi}{4\sqrt 2}$.
\end{itemize}
Again it may be checked that the zeros cross from left to right with increasing $h$.	
	Thus, we can deduce that system (\ref{eq:diffeq}) is stable when $0\leq h< \frac{\pi}{4 \sqrt 2}$.
	}
\end{ex} 

\begin{ex}{\rm
Examples involving subnormal operators are  hard\-er to analyse, since
the spectrum of a non-normal subnormal operator cannot be contained in any simple closed
curve \cite{putnam}.
As an example, consider again 
$P(s)I+Q(s)Ae^{-sh}$, and let  $P(s)=s+1$, $Q(s)=1$, and $A=1+S$, where
$S$ is the unilateral shift  operator; thus $A$ is unitarily equivalent to the operator of multiplication
by $1+z$ on the Hardy space $H^2$ and has spectrum $\sigma(A)=\{z \in \CC: |z-1| \le 1\}$.

From Proposition \ref{prop:2.3} we have at a zero-crossing for $\lambda$, that $1-s^2 = |\lambda|^2$, and thus
we need only consider the points $\lambda\in \sigma(A)$ with $1 \le |\lambda| \le 2$ (a crescent-shaped set).
We see from Proposition \ref{prop:2.3} that the system is stable for $h=0$ and that the poles cross from left to right as we increase $h$.

Setting $s=iy$, with $-\sqrt{3} \le y \le \sqrt{3}$, we then need to work with the 
equation
$e^{-sh}=-(s+1)/\lambda$, or equivalent $e^{sh}=(s-1)/\overline\lambda$.

For each values of $s$ is easily verified that the $\lambda$ leading to the smallest stability margin lies on the circle $\{z \in \CC: |z-1| \le 1\}$, and then it is an exercise to verify that the extreme value $\lambda=2$ corresponding to $s=i\sqrt{3}$
gives the minimal margin of $0 \le h < 2\pi/(3\sqrt 3)$.
 }
 
\end{ex}

%
%


\begin{thebibliography}{99}
 
 \bibitem{ABHN}
W. Arendt, C.J.K. Batty, M. Hieber and F. Neubrander,   
{\em Vector-valued Laplace transforms and Cauchy problems.} 
Second edition. Monographs in Mathematics, 96. Birkh{\"a}user/Springer Basel AG, Basel, 2011.
 
 \bibitem{BC}
R.  Bellman and K.L. Cooke, {\em Differential-difference equations}. Academic Press, New York-London, 1963.
 

 
 \bibitem{BonnetPartington2002}
C.  Bonnet and J.R. Partington,  
Analysis of fractional delay systems of retarded and neutral type. 
{\em Automatica J. IFAC\/} 38 (2002), no. 8, 1133--1138. 

\bibitem{bram}
J. Bram,  
Subnormal operators.
{\em Duke Math. J.} 22 (1955), 75--94. 

\bibitem{conway}
J.B. Conway, {\em Subnormal operators}. Research Notes in Mathematics, 51. Pitman (Advanced Publishing Program), Boston, Mass.-London, 1981. 

\bibitem{francis}
B.A. Francis, {\em A course in $H_\infty$ control theory}. Lecture Notes in Control and Information Sciences, Springer-Verlag, Berlin, 1987.

\bibitem{zen09}
Z. Harper,  Boundedness of convolution operators and input-output maps between weighted spaces. 
{\em Complex Anal. Oper. Theory\/} 3 (2009), no. 1, 113--146.

\bibitem{zen10}
Z. Harper,   Laplace transform representations and Paley--Wiener theorems for functions on vertical strips. 
{\em Doc. Math.} 15 (2010), 235--254.

\bibitem{HornJohnson1990}
R.A. Horn and C.R. Johnson, 
{\em Matrix analysis}.
Corrected reprint of the 1985 original. Cambridge University Press, Cambridge, 1990. 

\bibitem{JMPW}
B. Jacob, A. Mironchenko, J.R. Partington and F. Wirth, Noncoercive Lyapunov functions for input-to-state stability of infinite-dimensional systems. {\em SIAM J. Control. Optim.} 58 (2020), no. 5, 2952--2978. 

\bibitem{jpp13}
B. Jacob, J.R. Partington and S. Pott, On Laplace--Carleson embedding theorems. 
{\em J. Funct. Anal.} 264 (2013), no. 3, 783--814.

\bibitem{jpp14}
B. Jacob, J.R. Partington and S. Pott, Applications of Laplace--Carleson embeddings to admissibility and controllability. 
{\em SIAM J. Control Optim.} 52 (2014), no. 2, 1299--1313.
 
 \bibitem{partington2004}
J.R.  Partington,  
 {\em Linear operators and linear systems. An analytical approach to control theory.}
  London Mathematical Society Student Texts, 60. Cambridge University Press, Cambridge, 2004
  
  \bibitem{paulsen}
V.I.  Paulsen and M. Raghupathi, {\em An introduction to the theory of reproducing kernel Hilbert spaces}. Cambridge Studies in Advanced Mathematics, 152.
 
 \bibitem{putnam}
 C.R. Putnam,  
The spectra of subnormal operators.
{\em Proc. Amer. Math. Soc.} 28 (1971), 473--477. 

\bibitem{vidyasagar}
M. Vidyasagar,
{\em  Control System Synthesis: A Factorization Approach}. MIT Press Series in Signal Processing, Optimization and Control, 7, 1985. 
 
 \bibitem{WM}
 K. Walton and J.E. Marshall,
 Direct method for TDS stability analysis,
 {\em IEE proceedings D, control theory and applications}
 134 (1987), 101--107.
 
 \bibitem{glover}
K. Zhou, J.C. Doyle and K. Glover,   {\em Robust and Optimal Control}. 
Prentice-Hall, Inc., Upper Saddle River, 1996.
 
 
 \end{thebibliography}
\end{document}